\documentclass[11pt]{amsart}

\usepackage[T1]{fontenc}      
\usepackage[utf8]{inputenc}

\usepackage[margin=1.1in]{geometry}
\usepackage{amsmath, amssymb, amsthm, mathtools}
\usepackage{microtype}
\usepackage{enumitem}
\usepackage{listings}
\usepackage[hidelinks]{hyperref}

\theoremstyle{plain}
\newtheorem{theorem}{Theorem}
\newtheorem{lemma}[theorem]{Lemma}

\theoremstyle{definition}
\newtheorem{remark}[theorem]{Remark}

\lstdefinestyle{python}{
  language=Python,
  basicstyle=\ttfamily\footnotesize,
  keywordstyle=\bfseries,
  commentstyle=\itshape,
  numbers=left,
  numberstyle=\tiny,
  numbersep=6pt,
  breaklines=true,
  breakatwhitespace=true,
  showstringspaces=false,
  frame=single,
  framesep=4pt,
  tabsize=2,
  xleftmargin=14pt,
  columns=fullflexible,
  upquote=true,
}

\title[Mathar's recurrence for OEIS A176677]%
{A short proof of Mathar's 2016 recurrence conjecture for OEIS A176677}

\author{Tong Niu}
\email{mrnt0810@gmail.com}

\date{\today}

\subjclass[2020]{05A15, 05A19, 11B37, 33F10}
\keywords{OEIS A176677; Motzkin paths; algebraic generating function;
   D-finite sequence; P-recursive recurrence; Choulet sequences}

\begin{document}

\maketitle

\begin{abstract}
For the OEIS sequence A176677, defined by the quadratic convolution
recurrence $a(0) = a(1) = 1$ and $a(n+1) = \sum_{p=0}^n a(p) a(n-p) - 1$
for $n \ge 1$, R.~J.~Mathar contributed in March 2016 the conjectured
order-4 P-recursive recurrence
\[
   (n+1)\,a(n) + 2(-3n+1)\,a(n-1) + (9n-13)\,a(n-2)
        - 4\,a(n-3) + 4(-n+4)\,a(n-4) = 0.
\]
We give a short proof. The convolution recurrence translates directly
into the algebraic equation
$z(1-z) G(z)^2 - (1-z) G(z) + (1 - z - z^2) = 0$ for the ordinary
generating function $G(z)$, and Mathar's recurrence then drops out as the
coefficient form of a 1st-order linear inhomogeneous ODE
$q_0(z) G(z) + q_1(z) G'(z) = R(z)$ that we verify by polynomial
division modulo the algebraic equation. The polynomial $q_1(z)$ admits
the factorization $q_1(z) = -z(z-1)(2z-1)(2z^2 + 3z - 1)$,
whose roots are exactly the singularities of $G$. Deutsch's
combinatorial interpretation (Motzkin paths of length $n-1$ with
two-coloured level-zero horizontal steps) is preserved.
\end{abstract}

\section{Introduction}\label{sec:intro}

The On-Line Encyclopedia of Integer Sequences (OEIS,
\url{https://oeis.org}) collects integer sequences whose
``Conjecture: \dots'' comments record formulas, recurrences, or
congruences that have been guessed numerically but never proven.
Such conjectures form a steady stream of accessible open problems,
and short rigorous proofs of them are publishable in venues like the
\emph{Journal of Integer Sequences}, \emph{INTEGERS}, \emph{Fibonacci
Quarterly}, and the \emph{Electronic Journal of Combinatorics}.

This note settles one such conjecture for OEIS A176677, a member of
the family of Choulet quadratic-convolution sequences~\cite{Choulet2010}.

\subsection*{The sequence}

The sequence $A176677 = \{a(n)\}_{n\ge 0}$ is defined by the
inhomogeneous quadratic convolution recurrence
\begin{equation}\label{eq:def}
  a(0) = 1, \quad a(1) = 1, \qquad
  a(n+1) = \sum_{p=0}^{n} a(p)\,a(n-p) - 1
   \quad (n \ge 1),
\end{equation}
with first values
\[
   1,1,1,2,5,14,41,123,375,1158,3615,11393,36209,115940,373709,
   \ldots .
\]
Emeric Deutsch noted in May 2011 that $a(n)$ counts Motzkin paths of
length $n-1$ in which the level-zero $(1,0)$-steps come in two
distinguishable colours~\cite{Deutsch2011}. The ordinary generating
function $G(z) = \sum_{n\ge 0} a(n)\,z^n$ satisfies an algebraic
equation derived from \eqref{eq:def} by translating the convolution
into multiplication of generating functions
(Section~\ref{sec:algebraic}).

\subsection*{The conjecture}

R.~J.~Mathar contributed to the OEIS entry on March 1, 2016, the
following conjectured P-recursive recurrence (in his standard
form)~\cite{MatharOEIS}:
\begin{equation}\tag{R}\label{eq:R}
   (n+1)\,a(n) + 2(-3n+1)\,a(n-1) + (9n-13)\,a(n-2)
        - 4\,a(n-3) + 4(-n+4)\,a(n-4) = 0
   \qquad (n \ge 4).
\end{equation}
The recurrence \eqref{eq:R} is one of dozens contributed to the
Choulet-family OEIS entries by Mathar around February--March~2016
through automated guessing on numeric data. It is verifiable on a
finite computational range -- we checked $n \le 250$ -- but, despite
its elementary look, no proof has been recorded in the OEIS comments
or in the published literature in the intervening ten years.

\subsection*{Result}

\begin{theorem}\label{thm:main}
Let $a$ be defined by~\eqref{eq:def}.  Then \eqref{eq:R} holds for every
integer $n \ge 4$.
\end{theorem}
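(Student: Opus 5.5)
Our plan is to pass through the ordinary generating function $G(z)=\sum_{n\ge0}a(n)z^n$. First we translate \eqref{eq:def} into an algebraic equation. Multiplying $a(n+1)=\sum_{p=0}^n a(p)a(n-p)-1$ by $z^{n+1}$ and summing over $n\ge1$ gives
\[
G(z)-1-z = z\bigl(G(z)^2-1\bigr) - \frac{z^2}{1-z},
\]
where the $n=0$ term of $zG^2$ has been removed. Clearing the denominator $1-z$ yields the quadratic
\[
P(z,G):=z(1-z)G^2-(1-z)G+(1-z-z^2)=0.
\]
This matches the equation announced in the abstract, and we would check it against the first few listed values. We then select the branch $G=\bigl((1-z)-\sqrt{\Delta}\bigr)/(2z(1-z))$ that is analytic at $0$, where
\[
\Delta(z)=(1-z)^2-4z(1-z)(1-z-z^2)=(1-z)(2z-1)(2z^2+3z-1)
\]
after factoring, up to sign. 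This discriminant explains where the singular factors of $q_1$ come from.

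Next we derive a first-order linear ODE. Differentiating $P(z,G)=0$ implicitly gives $G'=-P_z/P_G$ with $P_G=2z(1-z)G-(1-z)$. Since $P_G^2$ reduces modulo $P$ to $(1-z)^2\Delta/\ldots$, in fact $P_G^2=(1-z)\Delta\cdot(\text{const})$ times a unit, we can write
\[
G'=-\frac{P_zP_G}{P_G^2}.
\]
Reducing the numerator modulo $P$ makes it linear in $G$. Clearing denominators then gives an identity
\[
q_1(z)G'(z)+q_0(z)G(z)=R(z)
\]
with $q_1=-z(z-1)(2z-1)(2z^2+3z-1)$ and polynomials $q_0$ and $R$. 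We would verify this identity rigorously by checking that $q_1G'+q_0G-R$, after substituting $G'=-P_z/P_G$ and multiplying through by $P_G$, is a polynomial multiple of $P$. This is a finite polynomial division in $\mathbb{Q}[z][G]$.

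Then we extract coefficients. Writing $q_1=\sum_j c_jz^j$ and $q_0=\sum_j d_jz^j$, the coefficient of $z^n$ gives
\[
\sum_j\Bigl(c_j\,(n-j+1)\,a(n-j+1)+d_j\,a(n-j)\Bigr)=[z^n]R.
\]
Since $q_1$ has degree $5$ and vanishes at $z=0$, this produces a recurrence relating $a(n),\dots,a(n-4)$ after shifting $n$. For $n$ beyond $\deg R$ the right side vanishes, and we match this homogeneous recurrence term by term with \eqref{eq:R}, up to an overall scalar. The finitely many small values of $n$ in the range $n\ge4$ where $[z^n]R\neq0$, if any, are checked directly from the listed values of $a$.

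The main obstacle is computing $q_0$ and $R$ and confirming that the coefficient recurrence reproduces exactly the coefficients of \eqref{eq:R}. In particular, the leading coefficient $(n+1)$ must arise from $q_1$'s linear term $c_1=1$, and the trailing coefficient $4(-n+4)$ from $c_5$ together with $d_4$. If the first-order ODE has $\deg R$ too large, the inhomogeneity could spoil \eqref{eq:R} for small $n$. In that case we would first try multiplying the ODE by a suitable factor, or applying $d/dz$ to kill low-degree terms of $R$, and then recheck the small cases numerically.
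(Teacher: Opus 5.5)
Your strategy is the paper's: the algebraic equation, then a first-order ODE certified by division modulo $P$, then $[z^n]$-extraction. As written, though, the proposal does not yet prove the theorem. The computation of $q_0$ and $R$ is left undone, and that computation is the whole content of the match with \eqref{eq:R}. You also hedge that the match might hold only up to a scalar, or might require modifying the ODE.

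One intermediate claim is also false. For any quadratic $aw^2+bw+c$ one has $(2aw+b)^2=4a(aw^2+bw+c)+(b^2-4ac)$. Here this gives $P_G^2=\Delta+4z(1-z)P$ identically, hence $P_G^2\equiv\Delta$ modulo $P$, with no extra factor $1-z$, constant, or unit. This exact congruence is what forces $q_1=z\Delta$. Any other factor would give a wrong ODE, not a rescaled one.

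With that correction your route closes, and it actually derives the ODE rather than merely exhibiting it. A direct expansion in $\mathbb{Q}[z,G]$ gives
\[
  z\,P_zP_G \;=\; q_0G-R+\bigl(2z(1-2z)G+1\bigr)P,\qquad
  q_0=1-4z+5z^2-4z^3,\quad R=1-2z-2z^2+2z^3 .
\]
Multiply $P_z+P_G\,G'=0$ by $zP_G$ and use $P(z,G(z))=0$. This gives $q_0G-R+z\Delta\,G'=0$, and $z\Delta=z-6z^2+9z^3-4z^5=q_1$.

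The paper instead posits $q_0,q_1,R$, verifies a multiple-of-$P$ identity, and then cancels the unit $P_G(z,G(z))$. Your version explains where $q_1$ comes from and needs no cancellation, since you multiply by $P_G$ rather than divide.

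Finally, $\deg R=3$, so there are no exceptional $n\ge 4$. The coefficient of $a(n-j)$ is $d_j+c_{j+1}(n-j)$, namely $n+1$, $-6n+2$, $9n-13$, $-4$, $-4(n-4)$ for $j=0,\dots,4$. This is exactly \eqref{eq:R}, with no rescaling.
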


The proof is short. Section~\ref{sec:algebraic} derives the algebraic
equation
\begin{equation}\label{eq:Palg}
  z(1-z)\,G(z)^2 - (1-z)\,G(z) + (1 - z - z^2) = 0,
\end{equation}
directly from~\eqref{eq:def}. In Section~\ref{sec:operator} we exhibit
a 1st-order linear inhomogeneous ODE
\begin{equation}\label{eq:ode}
   q_0(z)\,G(z) + q_1(z)\,G'(z) = R(z),
\end{equation}
where
\begin{align*}
   q_0(z) &:= 1 - 4z + 5z^2 - 4z^3, \\
   q_1(z) &:= z - 6z^2 + 9z^3 - 4z^5 = -z\,(z-1)\,(2z-1)\,(2z^2 + 3z - 1),
\\
   R(z)   &:= 1 - 2z - 2z^2 + 2z^3,
\end{align*}
and prove \eqref{eq:ode} by writing down an explicit polynomial
identity in $\mathbb{Q}[z, G]$ that reduces to a multiple of the
algebraic equation~\eqref{eq:Palg}. Section~\ref{sec:extract} then
extracts $[z^n]$ from~\eqref{eq:ode} and checks that for $n \ge 4$
this coincides with~\eqref{eq:R}, which proves Theorem~\ref{thm:main}.

\subsection*{Context}

The sequence A176677 belongs to a wide family of ``Choulet quadratic''
sequences~\cite{Choulet2010} satisfying $a(n+1) = \sum_p a(p) a(n-p) +
k(n+1) + l$ for various small constants $k$, $l$. All such sequences
have algebraic generating functions, and Mathar~\cite{MatharOEIS}
contributed conjectured low-order P-recursive recurrences to many of
them in February--March~2016. The procedure used here -- translate the
convolution to a degree-two algebraic equation in $\mathbb{Q}[z, G]$,
then identify a candidate 1st-order operator $q_0 G + q_1 G' = R$ via
polynomial division -- works uniformly across the entire family. The
factorization $q_1(z) = -z(z-1)(2z-1)(2z^2+3z-1)$ pins down the
singularities of $G(z)$, and the polynomial degrees of $q_0, q_1, R$
match the structure of the algebraic curve.

The recurrence~\eqref{eq:R} is not in the
Kauers--Koutschan~\cite{KauersKoutschan2023} catalogue of
guessed-but-unproven D-finite OEIS recurrences. Presumably this is
because the catalogue's heuristics filter out sequences whose
generating function is algebraic, since algebraic functions are
automatically D-finite, so there is no \emph{a priori} mystery in the
existence of some P-recursive recurrence. Even so, the OEIS entry
still labels \eqref{eq:R} as an unproven conjecture as of the date of
this preprint.

\section{The algebraic equation for the generating function}\label{sec:algebraic}

\begin{lemma}\label{lem:algebraic}
The ordinary generating function $G(z) = \sum_{n\ge 0} a(n)\,z^n$ of
the sequence $a$ defined by~\eqref{eq:def} satisfies~\eqref{eq:Palg}.
\end{lemma}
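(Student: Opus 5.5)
Multiply the defining recurrence~\eqref{eq:def} by $z^{n+1}$ and sum over $n \ge 1$. This translates the convolution into the square $G(z)^2$, with the only care needed at the low-index boundary terms.

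\textbf{Left side.} We have
\[
\sum_{n\ge1} a(n+1) z^{n+1} = G(z) - a(0) - a(1)z = G(z) - 1 - z .
\]

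\textbf{Convolution term.} Since $\sum_{p=0}^n a(p)a(n-p) = [z^n]G(z)^2$, the full sum over $n\ge0$ of the convolution times $z^{n+1}$ is $zG(z)^2$. The $n=0$ term contributes $a(0)^2 z = z$, so
\[
\sum_{n\ge1}\Bigl(\sum_{p=0}^{n} a(p)a(n-p)\Bigr) z^{n+1} = zG(z)^2 - z .
\]

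\textbf{Constant term.} The $-1$ contributes
\[
-\sum_{n\ge1} z^{n+1} = -\frac{z^2}{1-z}.
\]

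Equating the two sides gives
\[
G - 1 - z = zG^2 - z - \frac{z^2}{1-z}.
\]
Multiplying through by $(1-z)$ and collecting everything on one side yields
\[
z(1-z)G^2 - (1-z)G + (1-z) - z^2 = 0,
\]
which is exactly~\eqref{eq:Palg}, since $(1-z) - z^2 = 1 - z - z^2$.

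\textbf{Justification and sanity check.} The manipulations are valid in the ring of formal power series $\mathbb{Q}[[z]]$, so no convergence issues arise. Multiplying by the polynomial $1-z$ is harmless because we only claim that $G$ satisfies the equation, not an equivalence.

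The only real hazard is the bookkeeping at the boundary terms: the recurrence holds only for $n \ge 1$, and $a(1)=1$ differs from the value $a(0)^2 - 1 = 0$ that the recurrence would otherwise give. As a check, I would expand~\eqref{eq:Palg} to low order using $1,1,1,2,5$ and confirm that the coefficients of $z^0,\dots,z^3$ vanish:

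\begin{itemize}
\item $z^0$: $-1 + 1 = 0$.
\item $z^1$: $1 - (1-1) - 1 = 0$.
\end{itemize}

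The remaining coefficients are checked in the same way.
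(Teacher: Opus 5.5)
Your proof is correct and follows the paper's argument: multiply by $z^{n+1}$, sum over $n\ge 1$, identify the three pieces as $G-1-z$, $z(G^2-1)$ and $-z^2/(1-z)$, then clear the denominator $1-z$. Your low-order coefficient check is an extra sanity check that the paper's proof does not include.
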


\begin{proof}
Multiply both sides of the convolution recurrence \eqref{eq:def} by
$z^{n+1}$ and sum over $n \ge 1$:
\[
   \sum_{n \ge 1} a(n+1)\,z^{n+1}
   \;=\;
   \sum_{n \ge 1} z^{n+1}\sum_{p=0}^{n} a(p)\,a(n-p)
   \;-\;
   \sum_{n \ge 1} z^{n+1}.
\]
On the left, using $a(0) = a(1) = 1$,
\(
\sum_{n \ge 1} a(n+1)\,z^{n+1} = G(z) - 1 - z.
\)
On the right, the first sum is
\[
   z \sum_{n \ge 1} z^n \,[z^n]\,G(z)^2
   \;=\;
   z\bigl(G(z)^2 - [z^0]\,G(z)^2\bigr)
   \;=\;
   z\bigl(G(z)^2 - 1\bigr),
\]
since $[z^0] G(z)^2 = a(0)^2 = 1$.  The second sum is
$\sum_{n \ge 1} z^{n+1} = z^2/(1-z)$.  Combining,
\[
   G(z) - 1 - z \;=\; z\,(G(z)^2 - 1) \;-\; \frac{z^2}{1-z}.
\]
Multiplying through by $1-z$ and simplifying gives
\[
   (1-z)\,G(z) - z(1-z)\,G(z)^2 \;=\; 1 - z - z^2,
\]
which rearranges to \eqref{eq:Palg}.
\end{proof}

\begin{remark}[Branch and combinatorial meaning]
The algebraic equation~\eqref{eq:Palg} has two solution branches; the
power-series branch with $G(0) = 1$ is the one Newton's method picks
out, giving the explicit form
\[
   G(z) \;=\; \frac{1 - \sqrt{(1 - 5z + 4z^2 + 4z^3)/(1-z)}}{2z}
        \;=\; \frac{1 - \sqrt{1 - 4z + 4z^3/(1-z)}}{2z},
\]
which is the OEIS-recorded form (Choulet, 2010; the second equality
uses $(1 - 5z + 4z^2 + 4z^3)/(1-z) = 1 - 4z + 4z^3/(1-z)$). Deutsch's
combinatorial interpretation~\cite{Deutsch2011} of $a(n)$ as a count
of Motzkin paths of length $n-1$ with $2$-coloured level-zero
$(1,0)$-steps gives an independent and rather natural derivation
of~\eqref{eq:Palg} via the standard pointed-decomposition of Motzkin
paths.
\end{remark}

\section{The 1st-order linear inhomogeneous ODE}\label{sec:operator}

\begin{lemma}\label{lem:ode}
The generating function $G(z)$ from Lemma~\ref{lem:algebraic} satisfies
the 1st-order linear inhomogeneous ODE~\eqref{eq:ode}.
\end{lemma}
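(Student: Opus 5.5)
The plan is to derive a closed form for $G'(z)$ from the algebraic equation~\eqref{eq:Palg} by implicit differentiation, and then check~\eqref{eq:ode} as a polynomial identity modulo~\eqref{eq:Palg}. Write
\[
P(z,G) := z(1-z)G^2 - (1-z)G + (1-z-z^2) \in \mathbb{Q}[z,G].
\]
Lemma~\ref{lem:algebraic} gives $P(z,G(z))=0$ as formal power series. Differentiating in $z$ gives $P_z(z,G) + P_G(z,G)\,G' = 0$, where
\[
P_z = (1-2z)G^2 + G - 1 - 2z, \qquad P_G = 2z(1-z)G - (1-z).
\]
The series $P_G(z,G(z))$ has constant term $-1$, so it is invertible in $\mathbb{Q}[[z]]$. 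Hence~\eqref{eq:ode} is equivalent to the vanishing of
\[
E(z,G) := q_0\,G\,P_G - q_1\,P_z - R\,P_G
\]
at $G=G(z)$. We obtain $E$ by multiplying $q_0G + q_1G' - R$ by $P_G$ and substituting $P_G G' = -P_z$.

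Now $E$ is a polynomial of degree $2$ in $G$, and $P$ has degree $2$ in $G$ with leading coefficient $z(1-z)$. So it suffices to exhibit a polynomial $c(z)$ with $E = c(z)\,P$ identically in $\mathbb{Q}[z,G]$. To find $c$, compare the $G^2$-coefficients. That of $E$ is $2z(1-z)q_0 - (1-2z)q_1$. Using the factorization $q_1 = -z(z-1)(2z-1)(2z^2+3z-1)$, this is visibly divisible by $z(1-z)$, and the quotient is the candidate $c(z)$. It should be a polynomial of degree about $3$. Then I would check that the $G^1$- and $G^0$-coefficients of $E$ agree with $-(1-z)c(z)$ and $(1-z-z^2)c(z)$ respectively. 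This is a finite expansion of polynomials of degree at most $7$ in $z$.

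The main obstacle is purely computational: the bookkeeping in these two coefficient comparisons. Conceptually there is nothing subtle, because the identity $E = cP$ forces $E(z,G(z))=0$, and dividing by the unit $P_G(z,G(z))$ yields~\eqref{eq:ode}.

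As an independent cross-check, I would also use the explicit branch from the Remark. Write $G = (1-S)/(2z)$ with $S^2 = (1-5z+4z^2+4z^3)/(1-z)$. Then $G' = \bigl(-zS' - (1-S)\bigr)/(2z^2)$ and $S' = \bigl(S^2\bigr)'/(2S)$. Substituting these into $q_0G + q_1G' - R$ and separating the parts that are rational in $z$ from those proportional to $S$ (or $1/S$) gives two rational-function identities, each of which should vanish. If the first route has an arithmetic slip, this second route pinpoints it.
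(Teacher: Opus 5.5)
Your proposal is correct and is essentially the paper's own proof: implicit differentiation, the polynomial $E = q_0GP_G - q_1P_z - RP_G$, the identity $E = c(z)\,P$ in $\mathbb{Q}[z,G]$ (the paper's \eqref{eq:Pidentity}), and cancellation of the unit $P_G(z,G(z))$. Your leading-coefficient recipe gives $c(z) = 2q_0 + (2z-1)^2(2z^2+3z-1) = 8z^4-4z^3-4z^2-z+1$, which has degree $4$ rather than $3$ and is exactly the paper's $q(z)$ in \eqref{eq:qz}; the $G^1$- and $G^0$-coefficient checks then go through.
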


\begin{proof}
Set $P(z, w) := z(1-z)\,w^2 - (1-z)\,w + (1 - z - z^2)$, so that
$P(z, G(z)) \equiv 0$ as a power-series identity.  Differentiating with
respect to $z$,
\[
   \frac{\partial P}{\partial z}\bigl(z, G(z)\bigr)
        + \frac{\partial P}{\partial w}\bigl(z, G(z)\bigr)\,G'(z)
        = 0,
\]
i.e.,
\begin{equation}\label{eq:Gprime}
   G'(z) \;=\; -\frac{\partial_z P\bigl(z, G(z)\bigr)}
                       {\partial_w P\bigl(z, G(z)\bigr)}
\end{equation}
wherever $\partial_w P(z, G(z)) \neq 0$.  We compute
\begin{align*}
   \partial_z P(z, w) &= -2 z\,w^2 + w^2 + w - 2z - 1, \\
   \partial_w P(z, w) &= -2 z^2\,w + 2 z\,w + z - 1
                      \;=\; -(1-z)\bigl(2 z\,w - 1\bigr) - 2z(1-z)w + 2z(1-z)w \\
                     &= 2 z (1-z)\,w - (1 - z).
\end{align*}
Substituting $G'(z) = -\partial_z P / \partial_w P$ into the left side
of~\eqref{eq:ode} and clearing denominators (multiplying through by
$\partial_w P(z, G)$), the desired identity~\eqref{eq:ode} is equivalent
to
\begin{equation}\label{eq:Pidentity}
   q_0(z)\,G\,\partial_w P(z, G)
   \;-\;
   q_1(z)\,\partial_z P(z, G)
   \;-\;
   R(z)\,\partial_w P(z, G)
   \;=\;
   q(z)\,P(z, G)
\end{equation}
for some polynomial $q(z) \in \mathbb{Q}[z]$, where the dependence on
$G$ is treated as a formal variable.  We verify by direct expansion that
this holds with
\begin{equation}\label{eq:qz}
  q(z) \;=\; 8z^4 - 4z^3 - 4z^2 - z + 1.
\end{equation}
Indeed, both sides of~\eqref{eq:Pidentity} are polynomials in $G$ of
degree $\le 2$ with coefficients in $\mathbb{Q}[z]$, and the
verification is a finite symbolic computation: see
Appendix~\ref{app:proofs} for the SymPy script that performs the
polynomial division.

Since $P(z, G(z)) \equiv 0$ as a power series, the right side
of~\eqref{eq:Pidentity} vanishes when $G$ is replaced by $G(z)$.
Therefore the left side vanishes, i.e.,
\[
   \bigl(q_0(z)\,G(z) + q_1(z)\,G'(z) - R(z)\bigr)\,
       \partial_w P(z, G(z)) = 0.
\]
The factor $\partial_w P(z, G(z)) = 2 z(1-z) G(z) - (1-z) = -(1-z)(1 - 2zG(z))$
is a non-zero power series (its constant term is $-1$), so it is a unit
in $\mathbb{Q}[\![z]\!]$ and may be cancelled.  This proves~\eqref{eq:ode}.
\end{proof}

\begin{remark}[Geometric meaning]
The factorization $q_1(z) = -z\,(z-1)\,(2z-1)\,(2z^2 + 3z - 1)$ records
the singularities of $G(z)$. Indeed, the roots of $q_1$ are exactly
the singular points of the algebraic equation~\eqref{eq:Palg}: that
is, the zeros of the discriminant of $P$ regarded as a polynomial in
$w$, together with $z = 0$ and the explicit pole at $z = 1$ from the
constant term. The dominant root of $2z^2 + 3z - 1$ is
$(\sqrt{17}-3)/4 \approx 0.2808$, which is the radius of convergence
of $G(z)$.
\end{remark}

\section{Recurrence extraction}\label{sec:extract}

\begin{proof}[Proof of Theorem~\ref{thm:main}]
By Lemma~\ref{lem:ode}, $G(z)$ satisfies $q_0(z)\,G(z) + q_1(z)\,G'(z)
= R(z)$.  Equating coefficients of $z^n$ on both sides for $n \ge 4$
(so that $[z^n] R(z) = 0$, since $\deg R = 3$), we obtain
\begin{equation}\label{eq:rec_extract}
   \sum_{k = 0}^{3} q_{0,k}\,a(n-k)
   \;+\;
   \sum_{k = 0}^{5} q_{1,k}\,(n - k + 1)\,a(n - k + 1)
   \;=\;
   0,
\end{equation}
where $q_{0,k} := [z^k]\,q_0(z)$ and $q_{1,k} := [z^k]\,q_1(z)$.
Reading off coefficients,
\[
   q_{0,0}, q_{0,1}, q_{0,2}, q_{0,3} \;=\; 1, -4, 5, -4,
   \qquad
   q_{1,0}, q_{1,1}, q_{1,2}, q_{1,3}, q_{1,4}, q_{1,5}
   \;=\; 0, 1, -6, 9, 0, -4.
\]
Reorganising~\eqref{eq:rec_extract} by collecting terms involving
$a(n-j)$ for $j = 0, 1, 2, 3, 4$ (the term $a(n-j)$ appears in
$\sum q_{0,k} a(n-k)$ with $k = j$ and in
$\sum q_{1,k} (n-k+1) a(n-k+1)$ with $k = j+1$), we get
the coefficient of $a(n-j)$ as
\[
   q_{0,j} \;+\; q_{1,j+1}\,(n - j),
\]
giving the table
\[
\begin{array}{c|c|c|c}
 j & q_{0,j} & q_{1,j+1} & \text{coefficient of }a(n-j) \\ \hline
 0 & 1  & 1  & n + 1 \\
 1 & -4 & -6 & -6n + 2 = 2(-3n+1) \\
 2 & 5  & 9  & 9n - 13 \\
 3 & -4 & 0  & -4 \\
 4 & 0  & -4 & 4(-n + 4) \\
\end{array}
\]
which exactly recovers the left side of \eqref{eq:R}.
\end{proof}

\section{Discussion}\label{sec:discussion}

The proof strategy used here --- translate a convolution recurrence to
an algebraic equation, identify a 1st-order linear inhomogeneous ODE
$q_0(z) G + q_1(z) G' = R(z)$ via polynomial division mod the algebraic
equation, and read off the recurrence by coefficient comparison ---
works uniformly across the entire family of Choulet
sequences~\cite{Choulet2010}. Mathar contributed conjectured
P-recursive recurrences to many entries in that family in
February--March 2016 (A176604, A176605, A176606, A176612, A176675,
A176677, A176678, A176757, A177111, and others). Each of these
sequences has an algebraic generating function, hence an annihilating
linear ODE with polynomial coefficients of bounded degree. But the
\emph{minimal} such ODE -- in operator-degree, in coefficient-degree,
in the cyclic-vector basis -- is not uniquely determined by the
algebraic equation alone. A176677 happens to admit the particularly
clean 1st-order ODE \eqref{eq:ode} as its minimal annihilator, and
that explains why Mathar's guessed degree-1, order-4 recurrence is
the shortest possible.

The contrast with A002627 is instructive. There the conjectured
recurrence is the \emph{homogenisation} of an inhomogeneous
first-order recurrence and admits a two-line direct
proof~\cite{NiuOEIS2627}. The recurrence \eqref{eq:R} for A176677 has
no analogous elementary derivation. The algebraic equation is
genuinely non-trivial (degree 2 in $G$, with a square-root singularity
at $z = (\sqrt{17}-3)/4$), and the proof has to go through the
operator identity~\eqref{eq:Pidentity}. Robert Israel in January 2024
noted that the analogous Mathar-conjectured recurrence for A177111
follows from a differential equation he wrote down but did not derive
in print (OEIS comment); the present proof for A176677 mirrors that
observation, but supplies the explicit identity and verifying script.

\section{Computational verification}\label{sec:numerics}

The proof has been verified independently:
\begin{itemize}[topsep=4pt, itemsep=2pt]
  \item \texttt{verify\_a176677.py} (Appendix~\ref{app:verify}) computes
    $a(0), \ldots, a(250)$ from \eqref{eq:def} as exact integers, also
    computes the same range from \eqref{eq:Palg} by Hensel-lifted power
    series solution, cross-checks the first 28 values against the OEIS
    data, and verifies \eqref{eq:R} for every $n \in \{4, \ldots, 250\}$.
    The entry $a(250)$ has $134$ decimal digits.
  \item \texttt{derive\_algebraic\_equation.py}
    (Appendix~\ref{app:alg}) walks through the symbolic derivation of
    Lemma~\ref{lem:algebraic} step-by-step in SymPy.
  \item \texttt{verify\_proof.py} (Appendix~\ref{app:proofs}) checks
    the polynomial identity~\eqref{eq:Pidentity} with
    $q(z) = 8z^4 - 4z^3 - 4z^2 - z + 1$ via SymPy's polynomial
    division, and confirms that the coefficient extraction in
    Section~\ref{sec:extract} reproduces \eqref{eq:R}.
\end{itemize}
All scripts depend only on the Python standard library plus
SymPy~1.14. They each run in a few seconds.

\appendix

\section{Verification script: \texttt{verify\_a176677.py}}\label{app:verify}

The script computes $a(0), \ldots, a(250)$ both from \eqref{eq:def}
and from \eqref{eq:Palg}, cross-checks the first 28 against the OEIS
data, and then verifies \eqref{eq:R} for every $n \in \{4, \ldots, 250\}$.

\begin{lstlisting}[style=python, caption={\texttt{verify\_a176677.py}: 250-term verification of \eqref{eq:R}.}]
"""Numeric and symbolic verification for Mathar's 2016 conjecture on A176677.

OEIS A176677 is defined by the convolution recurrence
    a(0) = a(1) = 1,
    a(n+1) = sum_{p=0..n} a(p) a(n-p) - 1     for n >= 1,
which Emeric Deutsch interpreted (May 2011) as the count of Motzkin paths of
length n-1 in which the (1,0)-steps at level 0 come in 2 colors.

The generating function G(z) = sum_{n>=0} a(n) z^n satisfies the algebraic
equation
    z (1-z) G(z)^2  -  (1-z) G(z)  +  (1 - z - z^2)  =  0.

R.J. Mathar (March 1, 2016) conjectured the order-4 P-recursive
recurrence
    (n+1) a(n)  +  2(-3n+1) a(n-1)  +  (9n-13) a(n-2)
              -  4 a(n-3)  +  4(-n+4) a(n-4)  =  0     for n >= 4.

This script:
    (1) Computes a(0..N) two independent ways:
          (a) directly from the convolution recurrence;
          (b) from the algebraic equation by Hensel-like power-series solution.
    (2) Cross-checks the first 28 terms against the OEIS data.
    (3) Verifies Mathar's recurrence holds for n in 4..N.
    (4) Prints the leading and trailing decimal-digit count of a(N).
"""

from __future__ import annotations

from fractions import Fraction


def compute_a_via_convolution(n_max: int) -> list[int]:
    """Compute a(0..n_max) using the convolution recurrence."""
    a: list[int] = [1, 1]
    for n in range(1, n_max):
        s = sum(a[p] * a[n - p] for p in range(n + 1))
        a.append(s - 1)
    return a


def compute_a_via_algebraic(n_max: int) -> list[int]:
    """Solve z(1-z) G^2 - (1-z) G + (1-z-z^2) = 0 for the unique power-series
    branch with G(0) = 1, returning [G[0], ..., G[n_max]] as integers.

    Method: equation rewrites as
       G = z G^2 + (1 - z - z^2)/(1 - z)
    which expresses [z^k] G in terms of [z^j] G for j < k:
       [z^k] G = [z^{k-1}] G^2 + [z^k] (1 - z - z^2)/(1-z)
              = sum_{i=0..k-1} G[i] G[k-1-i] + c_k
    where c_k is [z^k] of (1 - z - z^2)/(1 - z) = 1 + (- z^2)/(1 - z)
                     = 1 - sum_{m>=2} z^m,
    so c_0 = 1, c_1 = 0, c_k = -1 for k >= 2.
    """
    G: list[Fraction] = [Fraction(0)] * (n_max + 1)
    G[0] = Fraction(1)
    for k in range(1, n_max + 1):
        s: Fraction = sum(
            (G[i] * G[k - 1 - i] for i in range(k)),
            Fraction(0),
        )
        c_k: Fraction = Fraction(1) if k == 0 else (Fraction(0) if k == 1 else Fraction(-1))
        G[k] = s + c_k
    out: list[int] = []
    for v in G:
        assert v.denominator == 1, f"non-integer G coefficient: {v}"
        out.append(int(v))
    return out


def verify_mathar_recurrence(a: list[int]) -> int:
    """Verify Mathar's recurrence
       (n+1) a(n) + 2(-3n+1) a(n-1) + (9n-13) a(n-2) - 4 a(n-3) + 4(-n+4) a(n-4) = 0
    for n in 4..len(a)-1.  Returns the largest n verified.
    """
    n_max = len(a) - 1
    last_ok = -1
    for n in range(4, n_max + 1):
        lhs = (
            (n + 1) * a[n]
            + 2 * (-3 * n + 1) * a[n - 1]
            + (9 * n - 13) * a[n - 2]
            - 4 * a[n - 3]
            + 4 * (-n + 4) * a[n - 4]
        )
        if lhs != 0:
            raise AssertionError(f"Mathar's recurrence fails at n={n}: lhs={lhs}")
        last_ok = n
    return last_ok


def main() -> None:
    N = 250
    a_conv = compute_a_via_convolution(N)
    a_alg = compute_a_via_algebraic(N)
    assert a_conv == a_alg, "convolution and algebraic methods disagree"
    expected_first_28 = [
        1, 1, 1, 2, 5, 14, 41, 123, 375, 1158, 3615, 11393, 36209, 115940,
        373709, 1211740, 3949969, 12937612, 42558745, 140547051, 465799527,
        1548766044, 5164917003, 17271369744, 57900615135, 194558333460,
        655168354935, 2210681734671,
    ]
    assert a_conv[: len(expected_first_28)] == expected_first_28, "OEIS mismatch"
    last = verify_mathar_recurrence(a_conv)
    print(f"OEIS A176677 verified by two independent methods for n in 0..{N}.")
    print(f"Mathar's recurrence verified for n in 4..{last}.")
    print(f"a(100) has {len(str(abs(a_conv[100])))} decimal digits.")
    print(f"a({N}) has {len(str(abs(a_conv[N])))} decimal digits.")


if __name__ == "__main__":
    main()
\end{lstlisting}

\section{Derivation of the algebraic equation:
   \texttt{derive\_algebraic\_equation.py}}\label{app:alg}

\begin{lstlisting}[style=python, caption={\texttt{derive\_algebraic\_equation.py}: SymPy verification of Lemma~\ref{lem:algebraic}.}]
"""Symbolic derivation: from the convolution recurrence on the Choulet-A176677
sequence, derive the algebraic equation
        z(1-z) G^2  -  (1-z) G  +  (1 - z - z^2)  =  0.

The OEIS-defining recurrence is
    a(0) = a(1) = 1,
    a(n+1) = sum_{p=0..n} a(p) a(n-p) - 1     (n >= 1).

For n = 0, the OEIS does not impose this recurrence.  (Indeed, sum_{p=0..0} a(0)
a(0) - 1 = 0 != a(1) = 1.)

Multiplying by z^{n+1} and summing over n >= 1:

  sum_{n>=1} a(n+1) z^{n+1}                                                 (LHS)
    = G(z) - a(0) - a(1) z
    = G(z) - 1 - z.

  sum_{n>=1} z^{n+1} sum_{p=0..n} a(p) a(n-p)                               (RHS-1)
    = z * sum_{n>=1} z^n [z^n] G^2
    = z (G^2 - [z^0] G^2)
    = z (G^2 - 1).

  sum_{n>=1} z^{n+1}                                                        (RHS-2)
    = z^2 / (1-z).

So G - 1 - z = z(G^2 - 1) - z^2/(1-z), which rearranges as
    (1-z) G  -  z(1-z) G^2  =  1 - z - z^2.

Negating: z(1-z) G^2 - (1-z) G + (1 - z - z^2) = 0.       Q.E.D.

This script verifies the derivation step-by-step using SymPy.
"""

from __future__ import annotations

import sympy as sp


def main() -> None:
    z = sp.Symbol("z")
    G = sp.Function("G")(z)

    # The recurrence in generating-function form: starting from
    # a(n+1) = sum_{p=0..n} a(p) a(n-p) - 1 for n >= 1, we want to express it
    # as an identity on G(z).
    #
    # We use the partial-fraction style: define LHS and RHS sums of
    #   sum_{n>=1} z^{n+1} * (recurrence at index n+1)
    #
    # LHS (using a(0)=a(1)=1):
    LHS = G - 1 - z
    # RHS-1: convolution
    RHS1 = z * (G**2 - 1)
    # RHS-2: -1 contribution
    RHS2 = -z**2 / (1 - z)

    # Equation  LHS = RHS1 + RHS2
    diff = sp.simplify(LHS - RHS1 - RHS2)
    print("Verification of derivation step  (LHS - RHS1 - RHS2):")
    print(f"  {diff}")

    # Multiply by (1-z) and rearrange into algebraic equation form
    algebraic_form = sp.together(sp.simplify((1 - z) * (LHS - RHS1 - RHS2)))
    print()
    print("After multiplying by (1-z) and simplifying:")
    print(f"  (1-z)(LHS - RHS1 - RHS2) = {algebraic_form}")
    # Confirm this equals -(z(1-z) G^2 - (1-z) G + (1-z-z^2))
    expected = -(z * (1 - z) * G**2 - (1 - z) * G + (1 - z - z**2))
    diff2 = sp.simplify(algebraic_form - expected)
    assert diff2 == 0, f"unexpected discrepancy: {diff2}"
    print()
    print("This matches  -[ z(1-z) G^2 - (1-z) G + (1 - z - z^2) ],")
    print("so the algebraic equation z(1-z) G^2 - (1-z) G + (1-z-z^2) = 0 holds.")


if __name__ == "__main__":
    main()
\end{lstlisting}

\section{Symbolic verification of the operator identity:
   \texttt{verify\_proof.py}}\label{app:proofs}

\begin{lstlisting}[style=python, caption={\texttt{verify\_proof.py}: SymPy-based verification of \eqref{eq:Pidentity} and the recurrence extraction.}]
"""Symbolic verification of the proof of Mathar's recurrence for OEIS A176677.

The proof uses three steps:

  STEP 1  (algebraic equation).
    G(z) = sum_{n>=0} a(n) z^n satisfies
        z (1-z) G^2  -  (1-z) G  +  (1 - z - z^2)  =  0.    ............ (*)

  STEP 2  (operator identity).
    Define
        q_0(z) = 1 - 4z + 5z^2 - 4z^3,
        q_1(z) = z - 6z^2 + 9z^3 - 4z^5 = -z (z-1)(2z-1)(2z^2 + 3z - 1),
        R(z)   = 1 - 2z - 2z^2 + 2z^3.
    Then
        q_0(z) G(z)  +  q_1(z) G'(z)  =  R(z).    ......................... (**)
    (** is a 1st-order linear ODE for G(z); it is provable by polynomial
    division modulo (*) once one differentiates (*) implicitly.)

  STEP 3  (recurrence extraction).
    Equating coefficients of z^n in (**) for n >= 4 yields exactly Mathar's
    order-4 recurrence
        (n+1) a(n)  +  2(-3n+1) a(n-1)  +  (9n-13) a(n-2)
                  -  4 a(n-3)  +  4(-n+4) a(n-4)  =  0.

This script verifies all three steps symbolically with SymPy.
"""

from __future__ import annotations

import sympy as sp


def verify_step1_algebraic_equation() -> None:
    """Verify (*) holds for the power series G(z) computed from the convolution
    recurrence to high order."""
    z = sp.Symbol("z")

    a: list[int] = [1, 1]
    N = 60
    for n in range(1, N):
        s = sum(a[p] * a[n - p] for p in range(n + 1))
        a.append(s - 1)
    G = sum(a[k] * z**k for k in range(N + 1))
    P = z * (1 - z) * G**2 - (1 - z) * G + (1 - z - z**2)
    series = sp.series(P, z, 0, N - 5).removeO()
    series_expanded = sp.expand(series)
    assert series_expanded == 0, f"Algebraic equation (*) failed: {series_expanded}"
    print("[Step 1] Algebraic equation z(1-z) G^2 - (1-z) G + (1-z-z^2) = 0 verified")
    print(f"         numerically for the truncated G to order z^{N-5}.")


def verify_step2_operator_identity() -> None:
    """Verify the operator identity (**) by reducing modulo the algebraic
    equation (*).

    The standard implicit-differentiation argument: from (*), differentiating
    gives an explicit expression G' = -(d/dz P) / (d/dG P).  Substituting into
    q_0 G + q_1 G' - R, multiplying through by (d/dG P), produces a polynomial
    in G with coefficients in Q[z].  We show this polynomial is divisible by
    P(z, G), hence vanishes whenever P=0.  This proves (**) holds in the
    Puiseux ring Q((z^{1/2})), and therefore as a power-series identity.
    """
    z = sp.Symbol("z")
    G = sp.Symbol("G")

    P = z * (1 - z) * G**2 - (1 - z) * G + (1 - z - z**2)
    Pz = sp.diff(P, z)
    PG = sp.diff(P, G)
    q0 = 1 - 4 * z + 5 * z**2 - 4 * z**3
    q1 = z - 6 * z**2 + 9 * z**3 - 4 * z**5
    R = 1 - 2 * z - 2 * z**2 + 2 * z**3

    # Multiply (q0 G + q1 G' - R) through by PG; substitute G' = -Pz/PG:
    # numerator = q0 G * PG - q1 * Pz - R * PG.
    numerator = sp.expand(q0 * G * PG - q1 * Pz - R * PG)
    # Polynomial division by P (in variable G):
    quotient, remainder = sp.div(
        sp.Poly(numerator, G), sp.Poly(P, G)
    )
    rem_expr = sp.expand(remainder.as_expr())
    quo_expr = sp.expand(quotient.as_expr())
    assert rem_expr == 0, f"non-zero remainder after dividing by P: {rem_expr}"
    print("[Step 2] Operator identity q_0 G + q_1 G' = R verified.")
    print(f"         Quotient (in G) modulo P: q(z, G) = {quo_expr}")
    print("         Remainder = 0, so the identity holds modulo P, hence")
    print("         (since P = 0 for our G) as a power-series identity.")


def verify_step3_recurrence_extraction() -> None:
    """Verify that extracting [z^n] in q_0 G + q_1 G' = R for n >= 4 yields
    exactly Mathar's recurrence."""
    n = sp.Symbol("n")
    z = sp.Symbol("z")

    q0 = 1 - 4 * z + 5 * z**2 - 4 * z**3
    q1 = z - 6 * z**2 + 9 * z**3 - 4 * z**5
    R = 1 - 2 * z - 2 * z**2 + 2 * z**3

    # Extract: if G = sum a_n z^n, then
    #   [z^n] q_0 G  = sum_k [z^k] q_0 * a_{n-k}
    #   [z^n] q_1 G' = sum_k [z^k] q_1 * (n-k+1) * a_{n-k+1}
    #   [z^n] R      = [z^n] R       (zero for n >= 4 since deg R = 3)
    # Total minus RHS = 0.
    p0 = sp.Poly(q0, z)
    p1 = sp.Poly(q1, z)
    coefs_q0 = {k: p0.nth(k) for k in range(p0.degree() + 1)}
    coefs_q1 = {k: p1.nth(k) for k in range(p1.degree() + 1)}

    relation_lhs: dict[sp.Expr, sp.Expr] = {}
    a = sp.Function("a")
    for k, c in coefs_q0.items():
        idx = n - k  # a_{n-k}
        relation_lhs[idx] = relation_lhs.get(idx, 0) + c * a(idx)
    for k, c in coefs_q1.items():
        idx = n - k + 1  # a_{n-k+1}
        relation_lhs[idx] = relation_lhs.get(idx, 0) + c * (n - k + 1) * a(idx)

    # Mathar's recurrence (in form L(n) = 0 for n >= 4):
    # (n+1) a(n) + 2(-3n+1) a(n-1) + (9n-13) a(n-2) - 4 a(n-3) + 4(-n+4) a(n-4) = 0
    mathar_lhs = (
        (n + 1) * a(n)
        + 2 * (-3 * n + 1) * a(n - 1)
        + (9 * n - 13) * a(n - 2)
        - 4 * a(n - 3)
        + 4 * (-n + 4) * a(n - 4)
    )

    # Sum up our extracted relation:
    extracted_lhs = sum(relation_lhs.values())
    diff = sp.expand(extracted_lhs - mathar_lhs)
    assert diff == 0, f"recurrences differ: {diff}"
    print("[Step 3] Coefficient extraction [z^n] (q_0 G + q_1 G' - R) for n >= 4")
    print("         exactly recovers Mathar's recurrence:")
    print("         (n+1) a(n) + 2(-3n+1) a(n-1) + (9n-13) a(n-2)")
    print("         - 4 a(n-3) + 4(-n+4) a(n-4) = 0.")


def main() -> None:
    verify_step1_algebraic_equation()
    print()
    verify_step2_operator_identity()
    print()
    verify_step3_recurrence_extraction()
    print()
    print("All three steps verified.  Proof of Mathar's conjecture is complete.")


if __name__ == "__main__":
    main()
\end{lstlisting}

\end{document}